\theoremstyle{plain}
\newtheorem{thm}[subsection]{Theorem}
\newtheorem{defn}[subsection]{Definition}
\theoremstyle{definition}
\newtheorem{rmk}[subsection]{Remark}
\newtheorem{eg}[subsection]{Example}
\numberwithin{equation}{section}
\title[Stinespring's Theorem for maps on Hilbert $C^*$-modules]{Stinespring's Theorem for maps on Hilbert $C^*$- modules}
\author{B. V. Rajarama Bhat}
\author{G. Ramesh}
\author{K. Sumesh}
\address{Stat Math Unit\\I. S. I. Bangalore, Bangalore\\ India-560 059.}
\email{bhat@isibang.ac.in,ramesh@isibang.ac.in,sumesh@isibang.ac.in}
\thanks{The first author is thankful to UKIERI for financial support and the second author is thankful to the NBHM for financial support and ISI Bangalore for providing necessary facilities to carry out this work.}
\subjclass[2000]{ 46L08 }
\date{\today.}
\keywords{$C^*$ algebra, completely positive map, Stinespring representation, Hilbert $C^*$ module}
\begin{document}
\begin{abstract}
We strengthen  Mohammad B. Asadi's  analogue of Stinespring's
theorem for certain maps on Hilbert $C^*$-modules. We also show
that any two minimal Stinespring representations are unitarily
equivalent. We illustrate the main theorem with an example.
\end{abstract}
\maketitle
\section{Introduction}
Stinespring's representation theorem is a fundamental theorem in
the theory of completely positive maps. It is a structure theorem
for completely positive maps from a $C^*$-algebra into the algebra
of bounded operators on a Hilbert space. This theorem provides a
representation for  completely positive maps, showing that they
are simple modifications of  $*$-homomorphisms ( see
\cite{wfstinespring} for details). One may consider it as  a
natural generalization of the well-known Gelfand-Naimark-Segal
theorem for states on $C^*$-algebras (see \cite[Theorem 4.5.2,
page 278]{ringkad1} for details). Recently, a theorem which looks
like Stinespring's  theorem was presented by Mohammad B. Asadi in
\cite{asadi} for a class of unital maps on Hilbert $C^*$-modules.
Here we strengthen this result by removing a   technical condition
of Asadi's theorem \cite{asadi}. We also remove the assumption of
unitality on maps under consideration.  Further we prove
uniqueness up to unitary equivalence for  minimal representations,
which is an important ingredient of structure theorems  like GNS
theorem and  Stinespring's theorem. Now the result looks even more
like Stinespring's theorem.
\subsection{Notation and Earlier Results}
We denote Hilbert spaces by $H,H_1,H_2$ etc and the corresponding
inner product and the induced norm by $\langle ., .\rangle$ and
$\|\cdot \|$ respectively. Throughout we assume that the inner
product is conjugate linear in the first variable and linear in
the second variable. The space of bounded linear operators from
$H_1$ to $H_2$ is denoted by $\mathcal B(H_1,H_2)$ and $\mathcal
B(H,H)=\mathcal B(H)$. We denote $C^*$-algebras by $\mathcal
A,\mathcal B$ etc. The $C^*$-algebra  of all $n\times n$ matrices
with entries from $\mathcal A$ is denoted by $\mathcal
M_n(\mathcal A)$. If $L$ is a subset of a Hilbert space, then
$[L]:=\overline{\text{span}}(L).$

A linear map  $\phi:\mathcal A\rightarrow  \mathcal B$ is said to
be \textbf{positive} if $\phi(a^*a)\geq 0,\; \text{for all}\; a\in
\mathcal A$. If $\phi_n:\mathcal M_n(\mathcal A)\rightarrow
\mathcal M_n(\mathcal B)$, given by
$\phi_n((a_{ij}))=(\phi(a_{ij})),\; i,j=1,2,\dots, n$ is positive,
then $\phi$ is said to be $n$-positive. If $\phi_n$ is positive
for all $n$ ($n\geq 1$), then $\phi$ is called a
\textbf{completely positive map}. Completely positive maps from a
$C^*$-algebra $\mathcal A$ to $\mathcal B(H)$ is characterized by
Stinespring in \cite{wfstinespring}. This fundamental theorem is
well-known as Stinespring's representation theorem.
\begin{thm}(Stinespring's representation theorem \cite[Theorem 4.1, page
43]{paulsencbmoa}). Let $\mathcal A$ be a unital $C^*$-algebra and
$\phi:\mathcal A\rightarrow \mathcal B(H)$ be a completely
positive map. Then there exists a Hilbert space $K$, a unital
$\ast$-homomorphism $\rho:\mathcal A\rightarrow \mathcal B(K)$ and
a bounded operator $V:H\rightarrow K$ with $\|\phi(1)\|=\|V\|^2$
such that
$$\phi(a)=V^*\rho(a)V, \quad \text{for all}\; a\in \mathcal A.$$
\end{thm}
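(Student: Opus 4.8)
The plan is to mimic the GNS construction, building the Hilbert space $K$ out of the algebraic tensor product $\mathcal{A}\otimes H$. First I would equip $\mathcal{A}\otimes H$ with the sesquilinear form determined by
$$\langle a\otimes h,\; b\otimes k\rangle := \langle h,\phi(a^*b)k\rangle$$
and extended sesquilinearly. The crucial point is that this form is positive semi-definite: for $\xi=\sum_{i=1}^{n}a_i\otimes h_i$ one has $\langle\xi,\xi\rangle=\sum_{i,j}\langle h_i,\phi(a_i^*a_j)h_j\rangle$, and since the matrix $(a_i^*a_j)_{i,j}$ equals $B^*B$ for $B\in\mathcal{M}_n(\mathcal{A})$ the matrix whose first row is $(a_1,\dots,a_n)$ and whose remaining entries vanish, it is a positive element of $\mathcal{M}_n(\mathcal{A})$; complete positivity of $\phi$ (in fact $n$-positivity) then forces $(\phi(a_i^*a_j))_{i,j}\geq 0$ in $\mathcal{M}_n(\mathcal{B}(H))$, and pairing this positive operator with the column vector $(h_1,\dots,h_n)$ in $H^n$ gives $\langle\xi,\xi\rangle\geq 0$. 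Setting $N:=\{\xi:\langle\xi,\xi\rangle=0\}$, which is a subspace by Cauchy--Schwarz, the quotient $(\mathcal{A}\otimes H)/N$ carries an induced inner product, and I let $K$ be its completion.

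Next I would define the representation and the operator $V$. Put $\rho_0(a)(b\otimes h):=ab\otimes h$ on $\mathcal{A}\otimes H$. To see that $\rho_0(a)$ descends to $K$ and is bounded I use the same matricial trick: since $\|a\|^2\cdot 1-a^*a\geq 0$ in $\mathcal{A}$, writing it as $d^*d$ one gets $\|a\|^2(b_i^*b_j)_{i,j}-(b_i^*a^*ab_j)_{i,j}=D^*D\geq 0$ in $\mathcal{M}_n(\mathcal{A})$, where $D$ has first row $(db_1,\dots,db_n)$; applying $\phi_n$ and pairing with $(h_i)$ yields $\langle\rho_0(a)\xi,\rho_0(a)\xi\rangle\leq\|a\|^2\langle\xi,\xi\rangle$. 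In particular $\rho_0(a)N\subseteq N$, so $\rho_0(a)$ induces a bounded operator $\rho(a)\in\mathcal{B}(K)$ with $\|\rho(a)\|\leq\|a\|$. A routine check shows that $a\mapsto\rho(a)$ is linear and multiplicative, that $\langle\rho_0(a)(b\otimes h),c\otimes k\rangle=\langle b\otimes h,\rho_0(a^*)(c\otimes k)\rangle$ so $\rho(a)^*=\rho(a^*)$, and that $\rho(1)=I_K$; hence $\rho$ is a unital $*$-homomorphism. Define $V\colon H\to K$ by letting $Vh$ be the class of $1\otimes h$; then $\|Vh\|^2=\langle h,\phi(1)h\rangle\leq\|\phi(1)\|\,\|h\|^2$, so $V$ is bounded with $\|V\|^2\leq\|\phi(1)\|$.

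Finally I would verify the factorization and the norm identity. For $h,k\in H$ and $a\in\mathcal{A}$,
$$\langle k,V^*\rho(a)Vh\rangle=\langle Vk,\rho(a)Vh\rangle=\langle 1\otimes k,\; a\otimes h\rangle=\langle k,\phi(a)h\rangle,$$
so $\phi(a)=V^*\rho(a)V$. Taking $a=1$ gives $\phi(1)=V^*V$, whence $\|\phi(1)\|=\|V^*V\|=\|V\|^2$, which together with the earlier inequality yields the claimed equality $\|\phi(1)\|=\|V\|^2$. If in addition one wants a minimal representation, one replaces $K$ by the closed span $[\rho(\mathcal{A})VH]$, which is $\rho$-invariant and contains $VH$, so that $\rho$, $V$ restrict appropriately.

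The main obstacle --- indeed the only step that is not formal bookkeeping --- is the positivity argument: recognizing that the matrices $(a_i^*a_j)$ and $\|a\|^2(b_i^*b_j)-(b_i^*a^*ab_j)$ are positive in $\mathcal{M}_n(\mathcal{A})$ via explicit $C^*C$ factorizations, and then pushing this through $\phi$ by complete positivity. Well-definedness on the quotient, the algebraic identities for $\rho$, and the computation of $V^*\rho(a)V$ are all straightforward.
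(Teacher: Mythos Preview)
The paper does not supply its own proof of this statement; Stinespring's theorem is quoted in the subsection ``Notation and Earlier Results'' as a known result, with a citation to Paulsen's book, and is then used as a black box in Step~I of the proof of Theorem~\ref{stinespringhilbertmodules}. So there is nothing in the paper to compare your argument against line by line.

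That said, your argument is the standard GNS-type construction of the Stinespring dilation (and is essentially the proof found in the cited reference): form the semi-inner product on $\mathcal{A}\otimes H$ via $\phi$, use complete positivity together with the factorization $(a_i^*a_j)=B^*B$ to get positive semi-definiteness, quotient and complete to obtain $K$, let $\mathcal{A}$ act by left multiplication, and set $Vh=[1\otimes h]$. Your treatment of each step is correct, including the boundedness of $\rho(a)$ via $\|a\|^2\cdot 1-a^*a=d^*d$, the verification $\phi(a)=V^*\rho(a)V$, and the norm identity $\|\phi(1)\|=\|V^*V\|=\|V\|^2$. The closing remark about restricting to $[\rho(\mathcal{A})VH]$ to obtain minimality is also accurate and matches how the paper invokes the minimal Stinespring triple in Step~I.
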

 The triple $(\rho, V, K)$ in the Stinespring's representation theorem is called a representation for $\phi$. If $[\rho(\mathcal A)VH]=K$, then it is called a minimal representation.
It is known that if $(\rho,V,K)$ and $(\rho',V',K')$ are two
minimal Stinespring representations for $\phi$, then there exists
a unitary operator $U:K\rightarrow K'$ such that $UV=V'$ and
$U\rho(a) U^*=\rho'(a) $ for all $a\in \mathcal A$ (see
\cite[Proposition 4.2, page 46]{paulsencbmoa}).



Now we consider maps on Hilbert $C^*$-modules. Let $E$ be a
Hilbert $C^*$-module over a $C^*$ algebra $\mathcal A$ (see
\cite{lance} for details of Hilbert $C^*$-modules). Let
$\phi:\mathcal A\rightarrow \mathcal B(H_1)$ be linear. Then
$\phi$ is said to be a morphism if it is a $\ast$ homomorphism and
nondegenerate  (i.e., $\overline{\phi(\mathcal A)H_1}=H_1$). We
remind the reader that  $\mathcal B(H_1,H_2)$ is a Hilbert
$\mathcal B(H_1)$-module for any two Hilbert spaces $H_1, H_2$,
with the following operations:
\begin{enumerate}
 \item  module map: $(T,S)\mapsto TS: \mathcal B(H_1,H_2)\times \mathcal B(H_1)\rightarrow \mathcal B(H_1,H_2)$
\item  inner product: $\langle T,S\rangle \mapsto T^*S :\mathcal
B(H_1,H_2)\times \mathcal B(H_1,H_2)\rightarrow \mathcal B(H_1)$
\end{enumerate}
A map $\Phi:E\rightarrow \mathcal B(H_1, H_2)$ is said to be a
\begin{enumerate}
 \item \textbf{$\phi$-map} if $\langle \Phi(x),\Phi(y)\rangle =\phi(\langle x,y\rangle)$ for all $x,y\in E$;
\item \textbf{$\phi$-morphism} if $\Phi$ is a $\phi$-map and
$\phi$ is a morphism; \item \textbf{$\phi$-representation} if
$\Phi$ is a $\phi$-morphism and $\phi$ is a representation.
\end{enumerate}
Note that a $\phi$-morphism $\Phi$ is linear and satisfies
$\Phi(xa)=\Phi(x)\phi(a)$ for every $x\in E$ and $a\in \mathcal
A$.
\begin{thm}(Mohammad B. Asadi \cite{asadi})\label{asdithm}.
 If $E$ is a Hilbert $C^*$-module over the unital $C^*$-algebra $\mathcal A$, and $\phi:\mathcal A\rightarrow \mathcal B(H_1)$ is a completely positive map with $\phi(1)=1$
and $\Phi:E\rightarrow \mathcal B(H_1,H_2)$ is a $\phi$-map with
the additional property $\Phi(x_0)\Phi(x_0)^*=I_{H_2}$ for some
$x_0\in E$, where $H_1,H_2$ are Hilbert spaces, then there exist
Hilbert spaces $K_1,K_2$ and isometries $V:H_1\rightarrow K_1$ and
$W:H_2\rightarrow K_2$ and a $\ast$-homomorphism $\rho:\mathcal
A\rightarrow \mathcal B(K_1)$ and a $\rho$-representation
$\Psi:E\rightarrow \mathcal B(K_1,K_2)$ such that
\begin{equation*}
 \phi(a)=V^*\rho(a)V, \quad \Phi(x)=W^*\Psi(x)V
\end{equation*}
for all $x\in E,\; a\in \mathcal A$.
\end{thm}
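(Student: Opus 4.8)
The plan is to dilate $\phi$ by the classical Stinespring theorem, then manufacture the second Hilbert space and the $\rho$-representation by a GNS-type tensor construction, and finally use the distinguished element $x_0$ to produce the isometry $W$ and the intertwining relation $\Phi(x)=W^*\Psi(x)V$.

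\emph{Step 1 (dilating $\phi$).} Since $\phi$ is completely positive and $\phi(1)=1$, the Stinespring representation theorem produces a Hilbert space $K_1$, a unital $\ast$-homomorphism $\rho:\mathcal A\rightarrow\mathcal B(K_1)$, and $V:H_1\rightarrow K_1$ with $\phi(a)=V^*\rho(a)V$ for all $a$. Unitality of $\phi$ forces $V^*V=\phi(1)=I_{H_1}$, so $V$ is an isometry. (One may take the minimal dilation, but this is not needed for existence.)

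\emph{Step 2 (building $\Psi$ and $K_2$).} On the algebraic tensor product $E\odot K_1$ define a sesquilinear form by $\langle x\otimes\xi,\,y\otimes\eta\rangle:=\langle\xi,\rho(\langle x,y\rangle)\eta\rangle_{K_1}$, extended by linearity. It is positive semidefinite: for $x_1,\dots,x_n\in E$ and $\xi_1,\dots,\xi_n\in K_1$ the Gram matrix $(\langle x_i,x_j\rangle)$ is positive in $\mathcal M_n(\mathcal A)$ and $\rho$ is completely positive, so $\sum_{i,j}\langle\xi_i,\rho(\langle x_i,x_j\rangle)\xi_j\rangle\ge 0$. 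Let $N$ be the subspace of null vectors and set $K_2:=\overline{(E\odot K_1)/N}$, writing $\overline{x\otimes\xi}$ for the image of $x\otimes\xi$. For $x\in E$ put $\Psi(x)\xi:=\overline{x\otimes\xi}$; the bound $\|\Psi(x)\xi\|^2=\langle\xi,\rho(\langle x,x\rangle)\xi\rangle\le\|x\|^2\|\xi\|^2$ shows $\Psi(x)$ is well defined and bounded with $\|\Psi(x)\|\le\|x\|$, and routine computations give $\Psi(x)^*\Psi(y)=\rho(\langle x,y\rangle)$ and $\Psi(xa)=\Psi(x)\rho(a)$ (the latter because $\overline{xa\otimes\xi}-\overline{x\otimes\rho(a)\xi}$ lies in $N$). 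Linearity of $\Psi$ is immediate from that of the tensor product, and since $\rho$ is unital it is a nondegenerate representation, so $\Psi$ is a $\rho$-representation.

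\emph{Step 3 (the crux: producing $W$).} Define $W:H_2\rightarrow K_2$ by $W\eta:=\Psi(x_0)V\Phi(x_0)^*\eta=\overline{x_0\otimes V\Phi(x_0)^*\eta}$. Here the hypothesis $\Phi(x_0)\Phi(x_0)^*=I_{H_2}$ is used twice. First, $\|W\eta\|^2=\langle\Phi(x_0)^*\eta,\,\phi(\langle x_0,x_0\rangle)\Phi(x_0)^*\eta\rangle$ by Step 1, and since $\Phi$ is a $\phi$-map we have $\phi(\langle x_0,x_0\rangle)=\Phi(x_0)^*\Phi(x_0)$; combined with $\Phi(x_0)\Phi(x_0)^*\eta=\eta$ this collapses to $\|\eta\|^2$, so $W$ is an isometry. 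Second, for $x\in E$, $\xi\in H_1$, $\eta\in H_2$ one computes $\langle W\eta,\Psi(x)V\xi\rangle_{K_2}=\langle\Phi(x_0)^*\eta,\phi(\langle x_0,x\rangle)\xi\rangle_{H_1}$, and using $\phi(\langle x_0,x\rangle)=\Phi(x_0)^*\Phi(x)$ together with $\Phi(x_0)\Phi(x_0)^*=I$ this equals $\langle\eta,\Phi(x)\xi\rangle_{H_2}$. Hence $W^*\Psi(x)V=\Phi(x)$, while $V^*\rho(a)V=\phi(a)$ from Step 1, completing the argument.

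I expect the only genuinely delicate point to be Step 3: the rest is the standard Stinespring/GNS-type machinery. The right choice is $W=\Psi(x_0)V\Phi(x_0)^*$, and both the isometry property and the intertwining identity hinge on the coisometry relation $\Phi(x_0)\Phi(x_0)^*=I_{H_2}$ (with $\phi(1)=1$ keeping $V$ isometric); verifying that these two facts indeed make the construction work is the heart of the proof.
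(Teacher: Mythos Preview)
Your proof is correct, but it follows a genuinely different route from the paper. The paper does not prove Asadi's theorem directly; it proves the stronger Theorem~\ref{stinespringhilbertmodules} (with no $x_0$ hypothesis) and that argument specializes to the present statement. In the paper's construction $K_2$ is taken to be the concrete subspace $[\Phi(E)H_1]\subseteq H_2$, and $\Psi(x)$ is defined on the dense set $\mathrm{span}(\rho(\mathcal A)VH_1)$ of the \emph{minimal} Stinespring space $K_1$ by $\Psi(x)\big(\sum_j\rho(a_j)Vh_j\big)=\sum_j\Phi(xa_j)h_j$; then $W$ is simply the projection of $H_2$ onto $K_2$. Under the hypothesis $\Phi(x_0)\Phi(x_0)^*=I_{H_2}$ one has $H_2=\Phi(x_0)H_1\subseteq[\Phi(E)H_1]$, so in fact $K_2=H_2$ and $W=I_{H_2}$ is trivially an isometry. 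By contrast, you build $K_2$ abstractly as the completion of $E\odot K_1$ modulo the kernel of the form $\langle x\otimes\xi,y\otimes\eta\rangle=\langle\xi,\rho(\langle x,y\rangle)\eta\rangle$ (an interior-tensor-product construction), and then manufacture $W=\Psi(x_0)V\Phi(x_0)^*$ using $x_0$ explicitly. Your approach does not require the minimal Stinespring dilation for $\phi$, and by using $\rho$ (a $*$-homomorphism) rather than $\phi$ in the sesquilinear form you sidestep the positivity defect the paper flags in Asadi's original $E\otimes H_2$ form. The price is that your $K_2$ is typically non-minimal and your isometry $W$ and the intertwining relation depend essentially on the coisometry hypothesis on $\Phi(x_0)$; the paper's construction shows that hypothesis is in fact superfluous.
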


The proof of this Theorem as given in \cite{asadi} is erroneous as
the sesquilinear form defined there on $E\otimes H_2$ is not
positive definite. This can be fixed by inter-changing the indices
$i, j$ in the definition of this form. However such a modification
yields a `non-minimal' representation. Moreover, the technical
condition to have  $\Phi(x_0)\Phi(x_0)^*=I_{H_2}$ for some $x_0\in
E$ is completely unnecessary.
\section{Main Results}
In this Section we strengthen Asadi's theorem for a $\phi$-map
$\Phi$  and discuss the minimality of the representations.
\begin{thm}\label{stinespringhilbertmodules}
Let  $\mathcal A$ be a unital $C^*$-algebra and  $\phi:\mathcal A\rightarrow \mathcal B(H_1)$ be a completely positive map. Let $E$ be a
Hilbert $\mathcal A$-module and $\Phi:E\rightarrow \mathcal B(H_1,H_2)$ be a $\phi$-map. Then there
exists a pair of triples $((\rho,V,K_1), (\Psi,W,K_2))$, where
\begin{enumerate}
        \item   $K_1$  and $K_2$ are Hilbert spaces;
\item  $\rho:\mathcal A\rightarrow \mathcal B(K_1)$ is a unital
$\ast$-homomorphism and $\Psi:E\rightarrow \mathcal B(K_1,K_2)$ is
a $\rho$-morphism; \item  $V:H_1\rightarrow K_1$ and
$W:H_2\rightarrow K_2$ are bounded linear operators;
 \end{enumerate}
 such that
$$ \phi(a)=V^*\rho(a)V,\;\; \text{for all}\;\; a\in \mathcal A \;\; \text{and}\;\; \Phi(x)=W^*\Psi(x)V,\;\; \text{for all}\;\; x\in E.$$
 \end{thm}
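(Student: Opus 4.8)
The natural strategy is to build both representations simultaneously by a Stinespring-type GNS construction, starting from the classical Stinespring representation for $\phi$. First I would invoke Stinespring's representation theorem to obtain a Hilbert space $K_1$, a unital $*$-homomorphism $\rho:\mathcal A\to\mathcal B(K_1)$ and a bounded operator $V:H_1\to K_1$ with $\phi(a)=V^*\rho(a)V$; one can even take this representation minimal, so that $K_1=[\rho(\mathcal A)VH_1]$. The task is then to manufacture $(K_2,\Psi,W)$ compatible with the given $\phi$-map $\Phi$. The idea is to form the algebraic tensor product $E\odot K_1$ and equip it with the $\mathcal A$-balanced sesquilinear form determined by
$$\langle x\otimes \xi,\ y\otimes\eta\rangle := \langle \xi,\ \rho(\langle x,y\rangle_{\mathcal A})\,\eta\rangle_{K_1},$$
which is positive semidefinite precisely because $\rho$ is a $*$-homomorphism and $(\langle x_i,x_j\rangle)$ is a positive matrix over $\mathcal A$ for any finite family $x_1,\dots,x_n\in E$ (here is where the ``interchanged indices'' remark in the introduction is relevant — the order $\langle x,y\rangle$ versus $\langle y,x\rangle$ must be chosen so that positivity of $\rho_n$ applied to the Gram matrix gives the right sign). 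Quotienting by the null space and completing yields a Hilbert space $\widetilde K_2$.

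Next I would define $\Psi_0(x):K_1\to\widetilde K_2$ on the dense subspace by $\Psi_0(x)\eta := \overline{x\otimes\eta}$ and check it is bounded with $\|\Psi_0(x)\|\le \|x\|$, so it extends to $\Psi(x)\in\mathcal B(K_1,\widetilde K_2)$. A direct computation using the definition of the inner product shows $\Psi(x)^*\Psi(y)=\rho(\langle x,y\rangle)$, i.e.\ $\Psi$ is a $\rho$-map, and $\Psi(xa)=\Psi(x)\rho(a)$ follows from $\mathcal A$-balancing of the tensor product; linearity of $\Psi$ is similarly routine. Finally I define $W:H_2\to\widetilde K_2$ by $W\zeta := \sum$-type limits built from $\Phi$, or more cleanly: observe that the assignment $\Phi(x)^*\zeta \mapsto$ something is not obviously well defined, so instead I would define $W^*:\widetilde K_2\to H_2$ first on the dense subspace by $W^*(\overline{x\otimes\eta}) := \Phi(x)(?)$ — but $\eta\in K_1$, not $H_1$, so the cleanest route is to use $V$: set $W^*(\overline{x\otimes V h}) := \Phi(x)h$ for $h\in H_1$, check this is well defined and bounded on $[E\otimes VH_1]$, extend by zero (or arbitrarily boundedly) to all of $\widetilde K_2$, and take $W=(W^*)^*$. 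Then $W^*\Psi(x)Vh = W^*\overline{x\otimes Vh} = \Phi(x)h$, giving $\Phi(x)=W^*\Psi(x)V$ as required, while $V^*\rho(a)V=\phi(a)$ is inherited from Stinespring.

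The main obstacle is the well-definedness and boundedness of $W$ (equivalently $W^*$). For this one needs the $\phi$-map identity $\langle\Phi(x),\Phi(y)\rangle=\phi(\langle x,y\rangle)$ together with $\phi(a)=V^*\rho(a)V$ to compute, for $h,h'\in H_1$,
$$\langle \Phi(x)h,\ \Phi(y)h'\rangle_{H_2} = \langle h,\ \phi(\langle x,y\rangle)h'\rangle = \langle Vh,\ \rho(\langle x,y\rangle)Vh'\rangle = \langle \overline{x\otimes Vh},\ \overline{y\otimes Vh'}\rangle_{\widetilde K_2},$$
which shows the map $\overline{x\otimes Vh}\mapsto \Phi(x)h$ is a well-defined isometry from $[E\otimes VH_1]\subseteq\widetilde K_2$ onto $[\Phi(E)H_1]\subseteq H_2$; its adjoint, extended by zero on the orthogonal complement, is the desired $W$. (If one wants the representation minimal — relevant for the uniqueness statement mentioned in the abstract — one instead takes $K_2=[\Psi(E)VH_1]$ and $W$ the partial isometry above; boundedness and all intertwining relations then fall out of the same displayed identity.) Everything else — linearity of $\Psi$, the module property, the bound $\|\Psi(x)\|\le\|x\|$ — reduces to elementary Hilbert $C^*$-module estimates and the fact that $\rho$ preserves positivity.
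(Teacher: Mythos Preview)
Your argument is correct, but it is organized differently from the paper's. Both begin by taking a minimal Stinespring triple $(\rho,V,K_1)$ for $\phi$. The paper then sets $K_2:=[\Phi(E)H_1]\subseteq H_2$ \emph{concretely} and defines $\Psi(x)$ on the dense set $\operatorname{span}\rho(\mathcal A)VH_1$ by $\Psi(x)\bigl(\sum_j\rho(a_j)Vh_j\bigr):=\sum_j\Phi(xa_j)h_j$; well-definedness and the identity $\Psi(x)^*\Psi(y)=\rho(\langle x,y\rangle)$ come from the same inner-product computation you display, and $W$ is simply the orthogonal projection $H_2\to K_2$ (so $W^*$ is inclusion and $W^*\Psi(x)Vh=\Phi(x)h$ is immediate). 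By contrast, you build $K_2$ abstractly as the interior tensor product $E\otimes_\rho K_1$, so that $\Psi(x)\eta=\overline{x\otimes\eta}$ is the canonical creation map and the $\rho$-morphism properties (linearity, $\Psi(xa)=\Psi(x)\rho(a)$, $\Psi(x)^*\Psi(y)=\rho(\langle x,y\rangle)$) are automatic from the balancing; the map $\Phi$ then enters only in constructing $W$. The two constructions are unitarily equivalent: indeed, because $K_1$ is minimal, the balancing relation $\overline{x\otimes\rho(a)Vh}=\overline{xa\otimes Vh}$ forces $[E\otimes VH_1]=\widetilde K_2$, so your ``extend by zero'' clause is unnecessary and your isometry $\overline{x\otimes Vh}\mapsto\Phi(x)h$ identifies your $\widetilde K_2$ with the paper's $K_2$. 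The paper's route is slightly more economical (no tensor-product machinery, $K_2$ sits inside $H_2$), while yours makes the $\rho$-morphism structure of $\Psi$ transparent and situates the result within the standard induced-representation framework.
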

\begin{proof}
 We prove the theorem in two steps.\\

\noindent{Step I}: Existence of $\rho,V$ and $K_1$: This is the
content of  Stinespring's theorem \cite[Theorem 4.1, page
43]{paulsencbmoa} as  $\phi$ is a completely positive map. In fact
we can choose a minimal Stinespring representation for  $\phi$.
In this case $K_1=[\rho(\mathcal A) VH_1]$.\\
%

\noindent{Step II}: Construction of  $\Psi,W$ and $K_2$: Let $K_2:=[\Phi(E)H_1]$. Now define $\Psi: E\rightarrow \mathcal B(K_1,K_2)$ as follows:\\
 For $x\in E$, define $\Psi(x):K_1\rightarrow K_2$ by
\begin{equation*}
\Psi(x)\big(\sum_{j=1}^n \rho(a_j)Vh_j\big):=\sum_{j=1}^n\Phi(xa_j)h_j,\quad  a_j\in \mathcal A,\; h_j\in H_1,\; j=1,\dots, n,\, n\geq 1.
\end{equation*}
First we claim that $\Psi(x)$ is well defined. Let $a_j\in \mathcal A, h_j\in H_1,\; j=1,2,\dots, n, \; n\geq 1$. Then we have
$$\begin{array}{ll}
||\Psi(x)\big(\sum_{j=1}^n \rho(a_j)Vh_j\big)||^2 &=\left\langle \displaystyle \sum_{j=1}^n \Phi(xa_j)h_j,\sum_{i=1}^n \Phi(xa_i)h_i\right\rangle \\
                                                      &=\displaystyle \sum_{i,j=1}^n \langle \Phi(xa_j)h_j, \Phi(xa_i)h_i\rangle\\
                                                      &=\displaystyle \sum_{i,j=1}^n \langle h_j, (\Phi(xa_j))^*\Phi(xa_i)h_i\rangle\\
                                                      &=\displaystyle \sum_{i,j=1}^n \langle h_j, \phi(a_j^*\langle x,x\rangle a_i)h_i\rangle\\
                                                     &=\displaystyle \sum_{i,j=1}^n \langle h_j, V^*\rho(a_j^*\langle x,x\rangle a_i)Vh_i\rangle\\
                                                     &=\displaystyle \sum_{i,j=1}^n \langle \rho(a_j)Vh_j, \rho( \langle x,x\rangle ) \rho(a_i)Vh_i\rangle\\
                                                     &=\displaystyle \left\langle \sum_{j=1}^n \rho(a_j)Vh_j, \rho( \langle x,x\rangle ) (\sum_{i=1}^n \rho(a_i)Vh_i)\right\rangle\\
                                                     &\leq \|\rho(\langle x,x\rangle)\|\, \|\sum_{j=1}^n\rho(a_j)Vh_j\|^2\\
                                                      &\leq \|x\|^2 \, \|\displaystyle \sum_{j=1}^n\rho(a_j)Vh_j\|^2.
\end{array}$$
Hence $\Psi(x)$ is well defined and bounded. Hence it can be
extended to whole of $K_1$.

Next we prove that $\Psi$ is a $\rho$-morphism. For this, let $x,y\in E,a_i, b_j\in \mathcal A, g_i, h_j\in H_1,\; i=1,2,\ldots , m;  j=1,2,\dots,n; \, m, n\geq 1$. Then
\begin{align*}
&\left\langle \Psi(x)^*\Psi(y)\big(\sum_{j=1}^n \rho(b_j)Vh_j\big),\sum_{i=1}^m \rho(a_i)Vg_i\right\rangle \\
&=\left\langle \sum_{j=1}^n \Phi(yb_j)h_j,\sum_{i=1}^m\Phi(xa_i)g_i\right\rangle \\
                                                                    &=\sum _{j=1}^n\sum_{i=1}^m \langle  (\Phi(xa_i))^*\Phi(yb_j)h_j,g_i\rangle \\
                                                              &=\sum _{j=1}^n\sum_{i=1}^m \langle  \phi(\langle xa_i,yb_j\rangle )h_j,g_i\rangle \\
                                                        &=\sum _{j=1}^n\sum_{i=1}^m\langle V^*\rho(a_i)^* \rho(\langle x,y\rangle)\rho(b_j)Vh_j,g_i\rangle \\
                                  &=\left\langle  \rho(\langle x,y\rangle) \big(\sum_{j=1}^n\rho(b_j)Vh_j\big),\sum_{i=1}^m \rho(a_i)Vg_i\right\rangle.
\end{align*}
Thus $\Psi(x)^*\Psi(y)=\rho(\langle x,y\rangle)$ on the dense set
span($\rho(A)VH_1$) and hence they are equal on $K_1$.

Note that $K_2\subseteq H_2$. Let $W:=P_{K_2}$, the orthogonal projection onto $K_2$. Then $W^*:K_2\rightarrow H_2$ is the inclusion map. Hence $WW^*=I_{K_2}$.
That is $W$ is a co-isometry.

Finaly we give a representation for $\Phi$. For $x\in E$ and $h\in H_1$, we have
\begin{equation*}
W^*\Psi(x)Vh=\Psi(x)Vh=\Psi(x)(\rho(1)Vh)=\Phi(x)h.\qedhere
\end{equation*}
\end{proof}
\begin{defn}\label{minimalrepresentation}
 Let $\phi$ and $\Phi$ be as in Theorem \ref{stinespringhilbertmodules}. We say that a pair  of triples $\big((\rho, V, K_1),(\Psi, W, K_2)\big)$  is a \textbf{ Stinespring representation} of $(\phi, \Phi)$ if conditions (1)-(3)  of Theorem \ref{stinespringhilbertmodules} are satisfied. Such a representation  is said to be  \textbf{minimal}  if
\begin{itemize}
 \item [(a)] $K_1=[\rho(A)VH_1]$  and
\item [(b)] $K_2=[\Psi(E)VH_1]$.
\end{itemize}
\end{defn}
\begin{rmk}
Let $\phi$ and $\Phi$ be as in  Theorem
\ref{stinespringhilbertmodules}. The pair
$\big((\rho,V,K_1),(\Psi,W,K_2)\big)$ obtained  in the proof of
Theorem \ref{stinespringhilbertmodules},  is a minimal
representation for $(\phi,\Phi)$.
\end{rmk}
\begin{thm}\label{minimalequivalence}
Let $\phi$ and $\Phi$ be as in Theorem \ref{stinespringhilbertmodules}. Assume that $\big((\rho,V,K_1), (\Psi,W,K_2)\big)$ and $\big((\rho', V',K'_1), (\Psi',W',K'_2)\big)$ are minimal representations
for $(\phi,\Phi)$. Then there exists unitary operators $U_1:K_1\rightarrow K_1'$ and $U_2:K_2\rightarrow K_2'$ such that
\begin{enumerate}
 \item \label{unitary1} $U_1V=V',\; U_1\rho(a)=\rho'(a)U_1, \quad \text{for all}\; a\in \mathcal A$ and
\item \label{unitary2} $U_2W=W',\; U_2\Psi(x)=\Psi'(x)U_1,\quad \text{for all}\; x\in E$.
\end{enumerate}
That is, the following diagram commutes, for $a\in \mathcal A$ and $x\in E:$\\
$$\begin{diagram}
H_1 &\rTo^{V} &K_1      &\rTo^{\rho(a)}    &K_1       &\rTo^{\Psi(x)}   &K_2      &\lTo^{W}    &H_2\\
    &\rdTo_{V'}    &\dTo_{U_1} &                  &\dTo_{U_1}   &         &\dTo_{U_2} &\ldTo_{W'} \\
    &         &K'_1        &\rTo_{\rho'(a)}          &K'_1         &\rTo_{\Psi'(x)}         &K'_2
\end{diagram}$$
\end{thm}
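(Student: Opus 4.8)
The plan is to build the two unitaries by hand on the dense subspaces provided by minimality, mimicking the classical Stinespring uniqueness argument for $U_1$ and running the analogous argument for $U_2$, and then to read off the intertwining relations by density. The one place where density does not directly apply is the relation $U_2W=W'$, and that will be the main point requiring care.

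First I would construct $U_1$. By minimality (a), the sets $\text{span}\{\rho(a)Vh:a\in\mathcal A,h\in H_1\}$ and $\text{span}\{\rho'(a)V'h\}$ are dense in $K_1$ and $K_1'$. For finitely many $a_j\in\mathcal A$, $h_j\in H_1$,
$$\Big\|\sum_j\rho(a_j)Vh_j\Big\|^2=\sum_{i,j}\langle h_i,V^*\rho(a_i^*a_j)Vh_j\rangle=\sum_{i,j}\langle h_i,\phi(a_i^*a_j)h_j\rangle,$$
which depends only on $\phi$; the same computation applies to the primed data. Hence $U_1\big(\sum_j\rho(a_j)Vh_j\big):=\sum_j\rho'(a_j)V'h_j$ is a well-defined linear isometry between dense subspaces, extending to a unitary $U_1:K_1\to K_1'$. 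Taking all $a_j=1$ gives $U_1V=V'$, and replacing each $a_j$ by $aa_j$ gives $U_1\rho(a)=\rho'(a)U_1$ on the dense subspace, hence on $K_1$. This is exactly \cite[Proposition 4.2, page 46]{paulsencbmoa}.

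Next I would construct $U_2$ in the same way, now using minimality (b): $\text{span}\{\Psi(x)Vh\}$ is dense in $K_2$ and $\text{span}\{\Psi'(x)V'h\}$ in $K_2'$. Since $\Psi$ is a $\rho$-morphism, $\Psi(x)^*\Psi(y)=\rho(\langle x,y\rangle)$, so for finitely many $x_j\in E$, $h_j\in H_1$,
$$\Big\|\sum_j\Psi(x_j)Vh_j\Big\|^2=\sum_{i,j}\langle h_i,V^*\rho(\langle x_i,x_j\rangle)Vh_j\rangle=\sum_{i,j}\langle h_i,\phi(\langle x_i,x_j\rangle)h_j\rangle,$$
again depending only on $(\phi,\Phi)$. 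Thus $U_2\big(\sum_j\Psi(x_j)Vh_j\big):=\sum_j\Psi'(x_j)V'h_j$ is a well-defined isometry of dense subspaces, extending to a unitary $U_2:K_2\to K_2'$. Using $\Psi(xa)=\Psi(x)\rho(a)$ and $\Psi'(xa)=\Psi'(x)\rho'(a)$, one checks on the dense subspace that $U_2\Psi(x)\big(\sum_j\rho(a_j)Vh_j\big)=\sum_j\Psi'(xa_j)V'h_j=\Psi'(x)U_1\big(\sum_j\rho(a_j)Vh_j\big)$, so $U_2\Psi(x)=\Psi'(x)U_1$ on all of $K_1$.

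Finally, for $U_2W=W'$ I cannot argue on spanning vectors, since $W$ and $W'$ enter only through $\Phi$. Instead I would combine the defining relations $\Phi(x)=W^*\Psi(x)V=(W')^*\Psi'(x)V'$ with $U_1V=V'$ and $U_2\Psi(x)=\Psi'(x)U_1$: for all $x\in E$,
$$W^*\Psi(x)V=(W')^*\Psi'(x)V'=(W')^*U_2\Psi(x)U_1V=(W')^*U_2\Psi(x)V.$$
Hence $W^*-(W')^*U_2$ annihilates $\Psi(E)VH_1$, so it vanishes on $K_2=[\Psi(E)VH_1]$ by minimality; that is, $W^*=(W')^*U_2$ as maps $K_2\to H_2$, and taking adjoints gives $U_2W=W'$. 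The commuting diagram is then just a restatement of (1) and (2). The only genuinely delicate step is this last one: the argument must route through the already-established identity $U_2\Psi(x)=\Psi'(x)U_1$ together with minimality condition (b), since no direct formula for $W$ is available.
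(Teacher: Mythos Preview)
Your proof is correct and follows essentially the same route as the paper's. One small slip: in your final displayed chain the intermediate term $(W')^*U_2\Psi(x)U_1V$ does not type-check (since $\Psi(x)$ has domain $K_1$ while $U_1V$ lands in $K_1'$); the intended intermediate is $(W')^*\Psi'(x)U_1V$, after which $\Psi'(x)U_1=U_2\Psi(x)$ gives the conclusion --- alternatively, as the paper does, you can bypass the full intertwining here and use directly that $\Psi'(x)V'h=U_2(\Psi(x)Vh)$ by the very definition of $U_2$.
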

\begin{proof}
Existence of the unitary map $U_1:K_1\rightarrow K_1'$ follows by
Theorem \cite[Theorem 4.2, page 46]{paulsencbmoa}. This can be
obtained as follows: First define $U_1:\text{span}(\rho(\mathcal
A)VH_1)\rightarrow \text{span}(\rho'(\mathcal A)V'H_1)$   by
\begin{equation*}
U_1 (\sum_{j=1}^n \rho(a_j)Vh_j)
:=\sum_{j=1}^n\rho'(a_j)V'h_j,\quad  a_j\in \mathcal A,\; h_j\in
H_1,j=1,\dots,n,\, n\geq 1,
\end{equation*}
which can be seen to be an onto isometry. Let us denote the
extension of $U_1$ to $K_1$ by $U_1$ itself. Then $U_1$ is unitary
and satisfies the conditions in (\ref{unitary1}).

Now define $U_2:\text{span}(\Psi(E)VH_1)\rightarrow
\text{span}(\Psi'(E)V'H_1)$ by
\begin{equation*}
U_2(\sum_{j=1}^n\Psi(x_j)Vh_j):=\sum_{j=1}^n\Psi'(x_j)V'h_j, \quad x_j\in E, h_j\in H_1, j=1,2,\dots, n,\, n\geq 1.
\end{equation*}
We claim that $U_2$ is well defined and can be extended to a unitary map. For this consider
\begin{align*}
 \left \|\sum_{j=1}^n\Psi'(x_j)V'h_j\right\|^2 &= \left\langle \sum_{j=1}^n\Psi'(x_j)V'h_j,\sum_{i=1}^n\Psi'(x_i)V'h_i\right\rangle\\
                                            &= \sum_{i,j=1}^n\langle \Psi'(x_j)V'h_j,\Psi'(x_i)V'h_i\rangle\\
                                            &= \sum_{i,j=1}^n\langle h_j,V'^*(\Psi'(x_j))^*\Psi'(x_i)V'h_i\rangle\\
                                            &= \sum_{i,j=1}^n\langle h_j,V'^*\rho'(\langle x_j,x_i\rangle)V'h_i\rangle\\
                                            &= \sum_{i,j=1}^n\langle h_j,V^*\rho(\langle x_j,x_i\rangle)Vh_i\rangle\\
                                            &= \sum_{i,j=1}^n\langle h_j,V^*(\Psi(x_j))^*\Psi(x_i)Vh_i\rangle\\
                                            &= \left\langle \sum_{j=1}^n\Psi(x_j)Vh_j,\sum_{i=1}^n\Psi(x_i)Vh_i\right\rangle\\
                                            &=\left\|\sum_{j=1}^n\Psi(x_j)Vh_j\right\|^2.
\end{align*}
This concludes that $U_2$ is well defined and an isometry. Hence it can be extended to whole of $K_2$, call the extension $U_2$ itself, and being onto it is a unitary.

Since $\big((\rho,V,K_1),(\Psi,W,K_2)\big)$ and $\big((\rho',V',K'_1),(\Psi',W',K'_2)\big)$ are representations for $(\phi,\Phi)$, it follows that
\begin{align*}
\Phi(x)=W^*\Psi(x)V&=W'^*\Psi'(x)V'=W'^*U_2\Psi(x)V\\
                                  &\Rightarrow (W^*-W'^*U_2)\Psi(x)V=0,
\end{align*}
equivalently, $(W^*-W'^*U_2)\Psi(x)Vh=0$ for all $h\in H_1$. Since $[\Psi(E)VH_1]=K_2$, it follows that $U_2W=W'$.

To prove the remaining  part of  (\ref{unitary2}), it is enough to
show  $U_2\Psi(x)=\Psi'(x)U_1$ on the dense set
span($\rho(\mathcal A)VH_1$). Let $a_j\in \mathcal A, h_j\in
H_1,j=1,2,\dots,n,\; n\geq 1$. Consider
\begin{align*}
 U_2\Psi(x)\big(\sum_{j=1}^n\rho(a_j)Vh_j\big)&= U_2\big(\sum_{j=1}^n\Psi(xa_j)V h_j\big)\quad  (\text{since}\; \Psi \; \text{is}\; \rho-\text{morphism})\\
                                              &= \sum_{j=1}^n\Psi'(xa_j)V' h_j\;  \\
                                              &= \Psi'(x)\big(\sum_{j=1}^n\rho'(a_j)V' h_j\big) \quad (\text{since}\; \Psi' \; \text{is}\; \rho'-\text{morphism})\\
                                              &= \Psi'(x)U_1\big(\sum_{j=1}^n\rho(a_j)V h_j\big). \qedhere
\end{align*}
\end{proof}
\begin{rmk}
Let $\big((\rho,V,K_1),(\Psi,W,K_2)\big)$ be a Stinespring
representation for $(\phi,\Phi)$. If $\phi$ is unital, then $V$ is
an isometry. If the representation is minimal, then $W$ is a
co-isometry by the proof of  Theorem
\ref{stinespringhilbertmodules} and (\ref{unitary2}) of Theorem
\ref{minimalequivalence}.
\end{rmk}

We give an example to illustrate our result.
\begin{eg}\label{eg1}
 Let $\mathcal A=\mathcal M_2(\mathbb C),\; H_1=\mathbb C^2, H_2=\mathbb C^8$ and $E=\mathcal A\oplus \mathcal A$. Define $\phi:\mathcal A\rightarrow \mathcal B(H_1)$ by

$ \phi \Big(\begin{pmatrix}
a_{11} & a_{12}  \\
a_{21} & a_{22}
\end{pmatrix}\Big)= \begin{pmatrix}
a_{11} &  \frac{a_{12}}{2}  \\
 \frac{a_{21}}{2} & a_{22}
\end{pmatrix}, \; \text{for all}\; a_{ij}\in \mathbb C, i,j=1,2.$

Let $D=\begin{pmatrix}
        1& \frac{1}{2}\\
\frac{1}{2}& 1
       \end{pmatrix}$. Then  $\phi(A)=D \circ A,\;\text{for all}\; A\in \mathcal A$, here $\circ$ denote the Schur product. As $D$ is positive, $\phi$
is a completely positive map (see \cite[Theorem 3.7, page 31]{paulsencbmoa} for details).
 Define $\Phi:E\rightarrow \mathcal B(H_1, H_2)$ by

$$ \Phi\big((a_{ij}\oplus (b_{ij})\big)=  \left(\begin{matrix}
                                \medskip
                                \frac{\sqrt{3}}{2}a_{11} & \frac{\sqrt{3}}{2}a_{12}\\
                                 \medskip
                                 \frac{\sqrt{3}}{2}a_{21} & \frac{\sqrt{3}}{2}a_{22} \\
                                  \medskip
                                 \frac{\sqrt{3}}{2}b_{11} & \frac{\sqrt{3}}{2}b_{12}\\
                                  \medskip
                                 \frac{\sqrt{3}}{2}b_{21} & \frac{\sqrt{3}}{2}b_{22} \\
                                  \medskip
                                  \frac{1}{2}a_{11} & -\frac{1}{2}a_{12}\\
                                   \medskip
                                 \frac{1}{2}a_{21} & -\frac{1}{2}a_{22} \\
                                   \medskip
                                  \frac{1}{2}b_{11} & -\frac{1}{2}b_{12}\\
                                  \medskip
                                 \frac{1}{2}b_{21} & -\frac{1}{2}b_{22}
                               \end{matrix}\right), \quad (a_{ij}),(b_{ij})\in \mathcal A, i,j=1,2.$$
It can be verified that $\Phi$ is a $\phi$-map.

Let $K_1=\mathbb C^4$ and $K_2= H_2$.
In this case $\rho:\mathcal A\rightarrow \mathcal B(K_1),\; V:H_1\mathcal \rightarrow K_1$ and $\Psi:E\rightarrow \mathcal B(K_1,K_2)$ are given by
\begin{align*}
 \rho((a_{ij}))&=\left(\begin{array}{cc}
                 (a_{ij})  & 0\\
                 0& (a_{ij})
                \end{array}\right), \quad (a_{ij})\in \mathcal A, \; i,j=1,2.\\
V&=\begin{pmatrix}
\medskip
  \frac{\sqrt{3}}{2}&0\\
\medskip
  0&\frac{\sqrt{3}}{2} \\
\frac{1}{2}& 0\\
\medskip
0& -\frac{1}{2}
\end{pmatrix} \\
\Psi \big((a_{ij})\oplus (b_{ij})\big)&=\begin{pmatrix}
                                        (a_{ij}) &0\\
                                        (b_{ij})&0\\
                                         0&(a_{ij})\\
                                         0&(b_{ij})
                                       \end{pmatrix},\quad (a_{ij}),(b_{ij})\in \mathcal A,\; i,j=1,2.
\end{align*}
It is easy to verify that $\Psi$ is a $\rho$-morphism and $\Phi((a_{ij})\oplus (b_{ij}))=W^*\Psi((a_{ij})\oplus (b_{ij}))V$, where $W=I_{H_2}$.
This example illustate Theorem (\ref{stinespringhilbertmodules}).

\end{eg}
\begin{rmk}
 Note that in Example \ref{eg1}, there does not exists an $x_0\in E$ with the property that $\Phi(x_0)\Phi(x_0)^*=I_{H_2}$, which is an assumption in  Theorem \ref{asdithm}.
\end{rmk}

 \bibliographystyle{amsplain}
 \bibliography{ramesh2}
\end{document}